\definecolor{darkblue}{rgb}{0,0,0.6}
\title[$A$--theoretic Farrell--Jones for solvable groups]{The $A$--theoretic Farrell--Jones Conjecture for virtually solvable groups}
	\author[D. Kasprowski]{Daniel Kasprowski}
	\address{Max Planck Institute for Mathematics, Vivatsgasse 7, 53111 Bonn, Germany}
	\email{kasprowski@mpim-bonn.mpg.de}
	\urladdr{http://people.mpim-bonn.mpg.de/kasprowski/}
	\author[M. Ullmann]{Mark Ullmann}
	\address{FU Berlin, Institut f\"ur Mathematik, Arnimallee 7, 14195 Berlin, Germany}
    \email{mark.ullmann@math.fu-berlin.de}
    \urladdr{http://www.mi.fu-berlin.de/math/groups/top/members/postdoc/ullmann.html}
    \author[C. Wegner]{Christian Wegner}
    \address{Hausdorff Research Institute for Mathematics (HIM), Poppelsdorfer Allee 45, 53115 Bonn, Germany}
    \email{wegner@him.uni-bonn.de}
    \urladdr{http://www.math.uni-bonn.de/people/wegner/}
     \author[C. Winges]{Christoph Winges}
     \address{Hausdorff Research Institute for Mathematics (HIM), Poppelsdorfer Allee 45, 53115 Bonn, Germany}
     \email{christoph.winges@wwu.de}
     \urladdr{http://wwwmath.uni-muenster.de/reine/u/christoph.winges/}
\keywords{algebraic $K$--theory of spaces, Farrell--Jones Conjecture, solvable groups, lattices in Lie groups}
\subjclass[2010]{19D10}
\theoremstyle{plain}
\newtheorem{theorem}{Theorem}[section]
\newtheorem{lemma}[theorem]{Lemma}
\newtheorem{corollary}[theorem]{Corollary}
\newtheorem{proposition}[theorem]{Proposition}
\newtheorem*{theorem*}{Theorem}
\newtheorem*{mtheorem*}{Main Theorem}
\theoremstyle{definition}
\newtheorem{definition}[theorem]{Definition}
\newtheorem{remark}[theorem]{Remark}
\numberwithin{equation}{section}
\renewcommand{\epsilon}{\varepsilon}
\renewcommand{\theta}{\vartheta}
\renewcommand{\phi}{\varphi}
\newcommand{\Aa}{\mathbb{A}}
\newcommand{\FF}{\mathbb{F}}
\newcommand{\JJ}{\mathbb{J}}
\newcommand{\NN}{\mathbb{N}}
\newcommand{\QQ}{\mathbb{Q}}
\newcommand{\RR}{\mathbb{R}}
\newcommand{\ZZ}{\mathbb{Z}}
\newcommand{\cC}{\mathcal{C}}
\newcommand{\cF}{\mathcal{F}}
\newcommand{\cO}{\mathcal{O}}
\newcommand{\cR}{\mathcal{R}}
\newcommand{\cV}{\mathcal{V}}
\newcommand{\frakC}{\mathfrak{C}}
\newcommand{\frakS}{\mathfrak{S}}
\newcommand{\frakZ}{\mathfrak{Z}}
\newcommand{\frakp}{\mathfrak{p}}
\newcommand{\abs}[1]{\lvert #1 \rvert}
\newcommand{\dress}{\mathcal{D}r}
\newcommand{\EGF}[2]{E_{#2}(#1)}
\DeclareMathOperator{\cells}{\diamond}
\DeclareMathOperator{\im}{im}
\newcommand{\id}{\textup{id}}
\newcommand{\trans}{\textup{trans}}
\newcommand{\diam}{\textup{diam}}
\def\N{\mathbb N}
\begin{document}

\begin{abstract}
 We prove the $A$--theoretic Farrell--Jones Conjecture for virtually solvable groups.
 As a corollary, we obtain that the conjecture holds for $S$--arithmetic groups and lattices in almost connected Lie groups.
\end{abstract}

\maketitle

\section{Introduction}

For every group $G$ there is a functor $\Aa \colon \mathrm{Or}G \to \mathrm{Spectra}$ from the orbit category of $G$ to the category of spectra sending $G/H$ to (a spectrum weakly equivalent to) the non-connective $A$-theory spectrum $\Aa(BH)$. For any such functor $\FF \colon \mathrm{Or}G \to \mathrm{Spectra}$, a $G$--homology theory $H_\FF$ can be constructed via
\[H_\FF(X):= \mathrm{Map}_G(\_,X_+) \wedge_{\mathrm{Or}G} \FF,\]
see Davis and Lück \cite{Davis-Lueck(1998)}. We will denote its homotopy groups by $H_n^G(X;\FF):=\pi_nH_\FF(X)$. The assembly map for the family of virtually cyclic subgroups (in $A$--theory) is the map
\begin{equation*}
	H_n^G(E_{\cV\cC yc}G;\Aa)\to H^G_n(\mathrm{pt};\Aa)\cong A_n(BG)
\end{equation*}
induced by the map $E_{\cV\cC yc} G \to \mathrm{pt}$. Here, $E_{\cV\cC yc}G$ denotes the classifying space for the family of virtually cyclic subgroups, see L\"uck \cite{Lueck(2005)}. 
The assembly map can more generally be defined with coefficients, cf.~\cite[Conjecture~7.1]{UW}.
In this note, we consider the \emph{$A$--theoretic Farrell--Jones Conjecture with coefficients and finite wreath products}, which predicts for a discrete group $G$ that the assembly map with coefficients is an isomorphism for every wreath product $G \wr F$ of $G$ with a finite group $F$.

Our main result is the following:

\begin{theorem}\label{maintheorem}
 Let $G$ be a virtually solvable group.
 Then $G$ satisfies the Farrell--Jones Conjecture for $A$--theory with coefficients and finite wreath products.
\end{theorem}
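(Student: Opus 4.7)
The plan is to adapt the proof of the $K$--theoretic Farrell--Jones Conjecture for virtually solvable groups (due to Wegner) to the $A$--theoretic setting, using the axiomatic framework for the $A$--theoretic Farrell--Jones Conjecture developed in \cite{UW}.

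First, I would establish (or invoke) the standard inheritance properties of the $A$--theoretic Farrell--Jones Conjecture with coefficients and finite wreath products: closure under passage to subgroups, directed colimits along injections, group extensions (where the base group satisfies the conjecture and preimages of virtually cyclic subgroups do as well), and wreath products with finite groups. Combined with Wegner's structural reduction, these inheritance properties reduce the problem to the case of virtually polycyclic groups. The point is that although a finitely generated virtually solvable group need not itself be virtually polycyclic, one can decompose it, using its derived series together with suitable directed-colimit presentations, into virtually polycyclic building blocks.

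Next, I would handle virtually polycyclic groups directly. Any such group $P$ embeds as a uniform lattice in a virtually connected Lie group, and in particular acts properly and cocompactly by isometries on a CAT$(0)$ space (one may embed $P$ into some $\mathrm{GL}_n(\ZZ)$ and act on the symmetric space $\mathrm{SL}_n(\RR)/\mathrm{SO}(n)$). Wegner's geometric construction then exhibits $P$ as transfer reducible over the family $\cV\cC yc$ of virtually cyclic subgroups, using the geodesic flow on the associated flow space together with appropriately contracting maps. The $A$--theoretic transfer reducibility criterion, supplied by the framework of \cite{UW}, then yields the desired conclusion.

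The principal obstacle will be the $A$--theoretic implementation of the transfer: in contrast to $K$--theory, where one manipulates finitely generated projective modules and traces of idempotents, $A$--theory forces one to work with retractive spaces and to maintain careful geometric control of cell structures. The framework of \cite{UW} is designed precisely to handle these complications, so that Wegner's geometric arguments should transport with only minor modifications. A secondary issue is verifying that all the inheritance properties required, in particular compatibility with finite wreath products, are available in the $A$--theoretic setting at the level of generality needed to run the structural reduction outlined above.
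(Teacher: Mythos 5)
Your proposal has two genuine gaps, both at the heart of the argument.

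First, the reduction to virtually polycyclic groups does not work. The irreducible hard cases for virtually solvable groups are the groups $G_w = \ZZ[w,w^{-1}] \rtimes_{\cdot w} \ZZ$ for $w$ a non-zero algebraic number that is not a root of unity; the simplest example is the Baumslag--Solitar group $BS(1,2) = \ZZ[1/2] \rtimes_{\cdot 2} \ZZ$. This group is finitely presented and metabelian but not virtually polycyclic (its Fitting subgroup $\ZZ[1/2]$ is not finitely generated), and none of the inheritance properties you list reduces it to virtually polycyclic pieces: applying the extension criterion to $1 \to \ZZ[1/2] \to G \to \ZZ \to 1$ requires the conjecture for preimages of virtually cyclic subgroups of $\ZZ$, and the preimage of a finite-index subgroup of $\ZZ$ is again a group of the same type. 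The paper's reduction (following Wegner) therefore stops at the groups $G_w$, which must then be handled by a direct argument.

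Second, the geometric input you propose for the polycyclic case is false. A virtually polycyclic group acts properly and cocompactly by isometries on a CAT$(0)$ space only if it is virtually abelian: by the Solvable Subgroup Theorem for CAT$(0)$ groups, any solvable group acting properly cocompactly on a CAT$(0)$ space is virtually free abelian. Already the integral Heisenberg group is not a CAT$(0)$ group, and embedding a polycyclic group into $\mathrm{GL}_n(\ZZ)$ yields a non-cocompact action on the symmetric space, so transfer reducibility in the sense you invoke is unavailable. This is exactly why the paper does not use transfer reducibility alone: it introduces the hybrid notion of a Dress--Farrell--Hsiang--Jones group, combining the Farrell--Hsiang induction method (passing to finite quotients $\cO_w/q_1^{m_1}q_2^{m_2}\cO_w \rtimes \ZZ/t_w(q_1^{m_1}q_2^{m_2})\ZZ$ and analyzing images of Dress subgroups, \Cref{lem:images.of.dress.groups}) with a controlled transfer over compact contractible spaces carrying homotopy coherent actions. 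Your correct observations --- that the inheritance properties must be verified $A$--theoretically and that the retractive-space transfer of \cite{UW, ELPUW} replaces the module-theoretic one --- do not compensate for the absence of this hybrid mechanism, which is the actual content of the proof.
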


Using this, we can adapt previous work by R\"uping \cite{Rueping(2016)} and Kammeyer, L\"uck and R\"uping \cite{KLR(2016)} to $A$--theory:

\begin{corollary}\label{cor:s-arithmetic}
 The $A$--theoretic Farrell--Jones Conjecture with coefficients and finite wreath products holds for
 subgroups of $\mathrm{GL}_n(\QQ)$ or $\mathrm{GL}_n(F(t))$, where $F$ is a finite field.
 
 In particular, the conjecture holds for $S$--arithmetic groups.
\end{corollary}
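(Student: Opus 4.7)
The plan is to adapt R\"uping's argument \cite{Rueping(2016)} for $\mathrm{GL}_n(\QQ)$ together with its extension in Kammeyer--L\"uck--R\"uping \cite{KLR(2016)} to the case of $\mathrm{GL}_n(F(t))$ and to the general $S$-arithmetic setting, translating each step from $K$- and $L$-theory to $A$-theory. These arguments are essentially formal: they reduce the conjecture for the linear groups in question to the $A$-theoretic FJC for three classes of groups -- virtually solvable groups (provided by Theorem \ref{maintheorem}), word-hyperbolic groups, and CAT(0)-groups -- combined with a standard list of inheritance properties of the conjecture. The hyperbolic and CAT(0) cases for $A$-theory with coefficients and finite wreath products are available from the earlier $A$-theoretic work surveyed in \cite{UW}, so the non-formal input is already in place.

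The inheritance properties needed for the $A$-theoretic FJC with coefficients and finite wreath products are the usual ones: passage to arbitrary subgroups, invariance under directed colimits of groups, closure under finite products, and the transitivity principle (so that, in particular, extensions by virtually cyclic groups preserve the conjecture). Each is proved in the same way as in $K$- and $L$-theory and belongs to the formal setup developed in \cite{UW} and in the earlier sections of this paper. Given these tools, $\mathrm{GL}_n(\QQ)$ is written as a directed colimit of the groups $\mathrm{GL}_n(\ZZ[S^{-1}])$ indexed by finite sets $S$ of primes; their proper action on the product of the symmetric space $\mathrm{GL}_n(\RR)/\mathrm{O}(n)$ with the Bruhat--Tits buildings at the primes in $S$ yields a proper CAT(0)-action, and an induction on $n$ along parabolic subgroups -- whose unipotent radicals are solvable and whose Levi quotients are of lower rank -- then closes the argument, using Theorem \ref{maintheorem} together with the transitivity principle at each step. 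The $\mathrm{GL}_n(F(t))$ case is handled analogously by replacing the symmetric space input with the buildings associated to the valuations of $F(t)$.

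The main obstacle is not geometric but organisational: every step in \cite{Rueping(2016), KLR(2016)} has to be verified to remain valid for the enhanced form ``with coefficients and finite wreath products'' of the $A$-theoretic conjecture. Concretely, the inheritance properties above must be formulated and proved in that enhanced form, and each reduction in the induction must preserve the enhancement. Since wreath products $G \wr F$ with finite $F$ behave well under passage to subgroups, directed colimits and extensions -- e.g.\ $H \wr F \le G \wr F$ whenever $H \le G$, and similarly for colimits -- this is essentially a bookkeeping exercise. Once it is complete, the cited arguments apply without modification. The final ``in particular'' assertion follows because every $S$-arithmetic group embeds as a subgroup of $\mathrm{GL}_n(\QQ)$ for some $n$, to which the subgroup inheritance property applies.
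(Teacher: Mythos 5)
Your proposal follows essentially the same route as the paper: reduce via directed colimits to linear groups over localizations at finitely many primes, invoke R\"uping's transfer-reducibility over the product of the symmetric space and the Bruhat--Tits buildings in its $A$--theoretic form (\cite[Corollary~6.6]{ELPUW}), handle the resulting family of subgroups by the parabolic induction of \cite[Theorem~8.12]{Rueping(2016)} down to virtually solvable groups, and finish with \Cref{maintheorem} and the Transitivity Principle. The one phrase to be careful with is ``proper CAT(0)-action'': the action is proper but not cocompact, so these groups are not CAT(0)-groups and the absolute CAT(0) result does not apply; what is needed is precisely the \emph{relative} statement (the conjecture relative to a family of stabilizers), which is what your induction along parabolics is supplying.
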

\begin{proof}
 The proof works as the one of \cite[Theorem~8.13]{Rueping(2016)}:
 Since the conjecture is inherited under directed colimits \cite[Theorem~1.1(ii)]{ELPUW},
 it suffices to consider linear groups over localizations at finitely many primes.
 Then \cite[Proposition~2.2]{Rueping(2016)} together with \cite[Corollary~6.6]{ELPUW} shows that such a group satisfies the conjecture relative to a certain family of subgroups, all whose members in turn satisfy the conjecture relative to the class of virtually solvable groups \cite[Theorem~8.12]{Rueping(2016)}.
 The corollary follows from \Cref{maintheorem} together with the Transitivity Principle \cite[Proposition~11.2]{UW}.
\end{proof}

\begin{corollary}\label{cor:lattices}
 The $A$--theoretic Farrell--Jones Conjecture with coefficients and finite wreath products holds for
 arbitrary lattices in almost connected Lie groups.
 
 More generally, it holds for lattices $\Gamma$ in second countable, locally compact Hausdorff groups $G$ whose group of path components $\pi_0(G)$ is discrete and satisfies the $A$--theoretic Farrell--Jones Conjecture with coefficients and finite wreath products.
\end{corollary}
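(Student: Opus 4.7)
The plan is to follow the strategy of Kammeyer--L\"uck--R\"uping \cite{KLR(2016)}, where the corresponding statement was established in $K$-- and $L$--theory. The structural reductions they use are purely group-theoretic and transfer verbatim, so the content of the proof is to verify that each required input is available in $A$--theory with coefficients and finite wreath products.

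First I would reduce the more general statement to the case of lattices in almost connected Lie groups. Given $\Gamma \subset G$ as in the statement, the intersection $\Gamma \cap G^0$ with the identity component is a lattice in $G^0$, and $\Gamma/(\Gamma \cap G^0)$ embeds into $\pi_0(G)$. Combining inheritance under extensions, subgroups and finite index \cite{ELPUW} with the hypothesis that $\pi_0(G)$ satisfies the conjecture reduces the problem to lattices in almost connected Lie groups. Passing to a finite-index subgroup (permissible thanks to wreath-product invariance), I may further assume $\Gamma$ is contained in the identity component of an almost connected real Lie group. By Mostow's structure theorem there is then a short exact sequence
\[ 1 \to \Gamma \cap R \to \Gamma \to \Gamma/(\Gamma \cap R) \to 1, \]
where $R$ denotes the solvable radical of the ambient connected Lie group; the kernel is virtually polycyclic, and the quotient is a lattice in a semisimple Lie group.

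The kernel is virtually solvable, hence satisfies the conjecture by \Cref{maintheorem}. The quotient is a lattice in a semisimple Lie group and is handled via the $A$--theoretic versions of the inputs used in \cite{KLR(2016)}: cocompact lattices act properly and cocompactly on the associated symmetric space and therefore fall under the $A$--theoretic Farrell--Jones Conjecture for CAT($0$)--groups established in \cite{ELPUW}; non-cocompact lattices reduce, after a finite-index passage, to the $S$--arithmetic case handled by \Cref{cor:s-arithmetic} (via Margulis arithmeticity in higher rank and a direct inspection in rank one). Assembling the pieces via inheritance under extensions completes the argument. The main obstacle is bookkeeping: one must verify that every inheritance property and every foundational input (CAT($0$)--groups, arithmeticity, extensions, finite index, wreath products) has actually been established in $A$--theory with coefficients and wreath products. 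Once these ingredients are in place, the combinatorics of the reduction is formally identical to \cite{KLR(2016)}.
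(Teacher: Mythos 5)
Your proposal is correct and takes essentially the same route as the paper: the paper simply invokes \cite[Theorem~2]{KLR(2016)} as a black box (a formal statement valid for any class of groups with the listed closure properties and containing the listed building blocks), checking its hypotheses via \cite[Theorem~1.1]{ELPUW}, \Cref{maintheorem} and \Cref{cor:s-arithmetic}, whereas you unpack the reductions inside that theorem (identity component, Mostow decomposition along the solvable radical, cocompact versus arithmetic lattices in the semisimple quotient). The ingredients you list are exactly the ones the paper verifies, so the only caveat is the one you already flag yourself: the extension step must be phrased as the preimage-of-virtually-cyclics criterion from \cite[Theorem~1.1(ii)]{ELPUW} rather than unrestricted inheritance under extensions.
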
 
\begin{proof}
 In \cite{KLR(2016)}, it is shown that a class of groups satisfying the list of properties from \cite[Theorem~2]{KLR(2016)} also contains the groups considered in the corollary.

 The statement of \cite[Theorem~2]{KLR(2016)} holds for the class of groups satisfying the $A$--theoretic Farrell--Jones Conjecture with coefficients and finite wreath products by \cite[Theorem~1.1]{ELPUW}, \Cref{maintheorem} and \Cref{cor:s-arithmetic}.
\end{proof}

As explained in \cite[Section~3]{ELPUW}, the analogous statements of \Cref{maintheorem}, \Cref{cor:s-arithmetic} and \Cref{cor:lattices} for (topological, PL or smooth) Whitehead spectra and pseudoisotopy spectra also hold true.

\begin{remark}
 We have been informed that Thomas Farrell and Xiaolei Wu have independently obtained a proof of \Cref{maintheorem}.
\end{remark}

\subsection*{Acknowledgements}
This paper was conceived and written during the Junior Trimester Program ``Topology" at the Hausdorff Research Institute for Mathematics (HIM) in Bonn. The first author was financially supported by the Max Planck Society.

\section{Dress--Farrell--Hsiang--Jones groups}

The proof of the $A$--theoretic Farrell--Jones Conjecture for solvable groups relies on a concoction of the Farrell--Hsiang method \cite{UW} and transfer reducibility \cite{ELPUW} which mimics the combination of the methods from \cite{Bartels-Lueck(2012b)} and \cite{Bartels-Lueck(2012), Wegner(2012)} in \cite{Wegner(2015)}.

\begin{definition}\label{def:dress.groups}
 Let $F$ be a finite group. We call $F$ a \emph{Dress group} if there exists a normal series $P \unlhd H \unlhd F$ such that $P$ is a $p$--group for some prime $p$, $H/P$ is cyclic and $F/H$ is a $q$--group for some prime $q$.
\end{definition}

We refer to \cite[Definition~2.7]{Wegner(2015)} and \cite[Definition~2.12]{Wegner(2015)} for the definitions of ``homotopy coherent $G$--action" and ``controlled domination".

\begin{definition}\label{def:dfhj.group}
 Let $G$ be a discrete group and let $S \subseteq G$ be a finite and symmetric generating set of $G$ which contains the trivial element. Let $\cF$ be a family of subgroups of $G$.
 
 Then $G$ is a \emph{Dress--Farrell--Hsiang--Jones group with respect to $\cF$}, or \emph{DFHJ-group (with respect to $\cF$)} for short,
 if there exists $N \in \NN$  such that for every $n \in \NN$ there is a homomorphism $\pi \colon G \to F$ to a finite group with the property that for every Dress subgroup $D \leq F$ there exist
 \begin{enumerate}
  \item\label{it:dfhj.group1} a compact, contractible metric space $X_D$ such that for every $\epsilon > 0$ there is an $\epsilon$--controlled domination of $X_D$ by an at most $N$--dimensional finite simplicial complex;
  \item\label{it:dfhj.group2} a homotopy coherent $G$--action $\Gamma_D$ on $X_D$;
  \item\label{it:dfhj.group3} a $\pi^{-1}(D)$--simplicial complex $\Sigma_D$ of dimension at most $N$ whose isotropy is contained in $\cF$;
  \item\label{it:dfhj.group4} a $\pi^{-1}(D)$--equivariant map $f_D \colon G \times X_D \to \Sigma_D$ such that
   \begin{itemize}
    \item for all $g \in G$, $x \in X_D$ and $s \in S^n$
        \[
         d^{l^1}\big( f_D(g,x),f_D(gs^{-1},\Gamma_D(s,x)) \big) \leq \frac{1}{n},
        \]
    \item for all $g \in G$, $x \in X_D$ and $s_0,\ldots,s_n \in S^n$
        \[
         \diam\big\{ f_D(g,\Gamma_D(s_n,t_n,\ldots,s_0,x)) \,\big|\, t_1,\ldots,t_n \in [0,1] \big\} \leq \frac{2}{n}.
        \]
   \end{itemize}
 \end{enumerate}
\end{definition}

\begin{remark}\label{rem:dfhj.transfer-reducible.dfh}
 \ 
 \begin{enumerate}
  \item If $G$ is homotopy transfer reducible with respect to $\cF$ \cite[Definition~6.2]{ELPUW}, then it is a DFHJ-group with respect to $\cF$: Choose the finite quotient to be trivial for all $n$.
  \item If $G$ is a Dress--Farrell--Hsiang group with respect to $\cF$ \cite[Definition~7.3]{UW}, then it is a DFHJ-group with respect to $\cF$: Choose the transfer space $X_D$ to be a point for all $n$ and $D$.
 \end{enumerate}
\end{remark}

\begin{remark}\label{rem:comparison.contracting.conditions}
 Condition \eqref{it:dfhj.group4} in \Cref{def:dfhj.group} looks a bit different than \cite[Definition~4.1]{Wegner(2015)}.
 The difference lies mostly in notation.
 As we argue in the proof of \Cref{prop:gw.is.dfhj} below, the condition in \cite[Definition~4.1]{Wegner(2015)} implies ours.
 Conversely, the proof showing the existence of the functor $F$ in diagram~\eqref{eq:outline.of.proof} (cf.~\cite[Lemma~6.11]{ELPUW}) shows that condition~\eqref{it:dfhj.group4} also yields the condition in \cite[Definition~4.1]{Wegner(2015)}, up to some constants. 
\end{remark}

\begin{theorem}\label{thm:afjc.for.dfhjgroups}
 Suppose that $G$ is a DFHJ-group with respect to a family $\cF$ of subgroups of $G$.
 
 Then the $A$--theoretic isomorphism conjecture with coefficients relative $\cF$ holds for $G$.
\end{theorem}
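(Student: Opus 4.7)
The plan is to run a combined Farrell--Hsiang and transfer-reducibility argument in the $A$-theoretic controlled obstruction category framework, mirroring Wegner's $K$-theoretic proof \cite{Wegner(2015)} but using the $A$-theoretic machinery developed in \cite{UW, ELPUW}. As a first step I would unwind definitions and translate the isomorphism conjecture (relative to $\cF$, with coefficients) into the vanishing of a suitable controlled obstruction category. The key intermediate claim is that for every $n \in \NN$ one can produce a functor from this obstruction category into a category of $\cF$-controlled objects which is sufficiently contracting in the sense of \cite[Lemma~6.11]{ELPUW}; this is precisely the functor $F$ alluded to in diagram~\eqref{eq:outline.of.proof} and discussed in \Cref{rem:comparison.contracting.conditions}.

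To build this functor I would, for each $n$, use the homomorphism $\pi \colon G \to F$ provided by the DFHJ-condition together with Dress induction in the wreath-product form of \cite[Section~7]{UW} to reduce the vanishing statement to its analogue for the preimage $\pi^{-1}(D)$ of each Dress subgroup $D \leq F$. For a fixed $D$, I would then use the homotopy coherent action $\Gamma_D$ on $X_D$ to define a transfer that lifts the given controlled object to a controlled object parametrised over $G \times X_D$, exactly as in the transfer-reducibility step of \cite[Lemma~6.11]{ELPUW}; this is where the uniform dimension bound $N$ and the $\epsilon$-controlled domination of $X_D$ by a finite complex enter, ensuring the transfer lands in the allowed range. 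Finally I would push forward along $f_D$ to $\Sigma_D$. Since $\Sigma_D$ has isotropy in $\cF$, the pushed-forward object is $\cF$-controlled, while the contracting conditions in \eqref{it:dfhj.group4} translate the $S^n$-Lipschitz bound of $1/n$ and the diameter bound $2/n$ into the smallness estimates demanded by the obstruction category.

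The main obstacle is the compatibility issue flagged in \Cref{rem:comparison.contracting.conditions}: condition \eqref{it:dfhj.group4} is phrased differently from the contracting hypothesis in \cite[Definition~4.1]{Wegner(2015)}, so I would have to perform careful bookkeeping to show that the estimates propagate through the transfer and through the Dress induction in a uniform way, and to verify that the resulting functor $F$ genuinely satisfies the axioms imposed in \cite[Lemma~6.11]{ELPUW}. Once this matching is in place, the combination of the Dress-theoretic decomposition and the transfer argument proceeds in the same formal pattern as \cite[Section~4]{Wegner(2015)}, and the resulting vanishing of the obstruction category yields the $A$-theoretic isomorphism conjecture with coefficients relative to $\cF$, as claimed.
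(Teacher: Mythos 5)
Your outline matches the paper's proof: the argument is organized around diagram~\eqref{eq:outline.of.proof}, combining the Dress--Farrell--Hsiang transfer $\trans_1$ of \cite[Proposition~9.2]{UW} with the homotopy-coherent transfer $\trans_2$ over the spaces $X_{n,D}$ adapted from \cite[Section~7]{ELPUW}, then pushing forward along the maps $f_{n,D}$ to the complexes $\Sigma_{n,D}$, over which the $K$--theory with $h^{fin}$--equivalences vanishes. The only ingredient you leave implicit is that the transfer over $X_{n,D}$ lands in \emph{finitely dominated} objects of the sequence category, so the conclusion is not a direct ``vanishing of the obstruction category'' but requires the injectivity of $K_m(\Delta_{fd}\circ i)$, i.e.\ claim~\eqref{it:properties.of.main.diagram4} of \Cref{prop:properties.of.main.diagram} (\cite[Lemma~6.12]{ELPUW}), to transport the vanishing back to $(\cR^G_f(W,\JJ(G,E)),h)$.
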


The remainder of this section is dedicated to a proof of \Cref{thm:afjc.for.dfhjgroups} and is modelled on \cite[Section~4.2]{Wegner(2015)}.
Just like the proofs in \cite{UW, ELPUW}, we show that the fiber of the assembly map is weakly contractible.
This uses the fact that this fiber can be modelled by the $K$--theory of certain categories of controlled retractive spaces, whose definition we recall next (cf.~also \cite[Sections~2 and 3]{UW}).

A \emph{coarse structure} is a triple $\frakZ=(Z, \mathfrak{C}, \mathfrak{S})$ such that $Z$ is a Hausdorff $G$--space, $\frakC$ is a collection of reflexive, symmetric and $G$--invariant relations on $Z$ which is closed under taking finite unions and compositions, and $\frakS$ is a collection of $G$--invariant subsets of $Z$ which is closed under taking finite unions. See \cite[Definition~3.23]{UW} for the notion of a \emph{morphism of coarse structures}.
  
Fix a coarse structure $\frakZ$.
  
A \emph{labeled $G$--CW--complex relative W}, see \cite[Definition~2.3]{UW}, is a pair $(Y, \kappa)$, where $Y$ is a free $G$--CW--complex relative $W$ together with a $G$--equivariant function $\kappa \colon \cells Y \rightarrow Z$. Here, $\diamond Y$ denotes the (discrete) set of relative cells of $Y$.
  
A \emph{$\frakZ$--controlled map} $f \colon (Y_1,\kappa_1) \rightarrow (Y_2, \kappa_2)$ is a $G$--equivariant, cellular map $f \colon Y_1 \rightarrow Y_2$ relative $W$ such that for all $k \in \mathbb{N}$ there is some $C \in \mathfrak{C}$ for which
\[
 (\kappa_2,\kappa_1)(\{(e_2,e_1) \mid e_1 \in \cells_k Y_1, e_2\in \cells Y_2, \langle f(e_1) \rangle \cap e_2 \neq \emptyset\}) \subseteq C
\]
holds.
  
A \emph{$\frakZ$--controlled $G$--CW--complex relative W} is a labeled $G$--CW--complex $(Y,\kappa)$ relative $W$, such that the identity is a $\frakZ$--controlled map and for all $k \in \mathbb{N}$ there is some $S \in \mathfrak{S}$ such that
\[
  \kappa(\diamond_k Y)\subseteq S.
\]

A \emph{$\frakZ$--controlled retractive space relative $W$} is a $\frakZ$--controlled $G$--CW--complex  $(Y,\kappa)$ relative $W$ together with a $G$--equivariant retraction $r \colon Y \to W$, ie.~a left inverse to the structural inclusion $W \hookrightarrow Y$. The $\frakZ$--controlled retractive spaces relative $W$ form a category $\cR^G(W,\frakZ)$ in which \emph{morphisms} are $\frakZ$--controlled maps which additionally respect the chosen retractions.
  
The category of controlled $G$--CW--complexes (relative $W$) and controlled maps admits a notion of \textit{controlled homotopies}, see \cite[Definition~2.5]{UW} via the objects $(Y \leftthreetimes [0,1], \kappa \circ pr_Y)$, where $Y \leftthreetimes [0,1]$ denotes the reduced product which identifies $W \times [0,1] \subseteq Y \times [0,1]$ to a single copy of $W$ and $pr_Y: \cells Y \leftthreetimes [0,1] \rightarrow \diamond Y$ is the canonical projection.
In particular, we obtain a notion of \emph{controlled homotopy equivalence} (or \emph{$h$--equivalence}).
  
A $\frakZ$--controlled retractive space $(Y, \kappa)$ is called \textit{finite} if it is finite-dimensional, the image of $Y \backslash W$ under the retraction meets the orbits of only finitely many path components of $W$ and for each $z \in Z$ there is some open neighborhood $U$ of $z$ such that $\kappa^{-1}(U)$ is finite, see \cite[Definition~3.3]{UW}.
  
A $\frakZ$--controlled retractive space $(Y, \kappa)$ is called \textit{finitely dominated}, if there are a finite $\frakZ$--controlled, retractive space $D$, a morphism $p \colon D \rightarrow Y$ and a $\frakZ$--controlled map $i \colon Y \rightarrow D$ such that $p \circ i$ is controlled homotopic to $\id_Y$.
  
The finite, respectively finitely dominated, $\frakZ$--controlled retractive spaces form full subcategories $\cR^G_f(W,\frakZ) \subset \cR^G_{fd}(W,\frakZ) \subset \cR^G(W,\frakZ)$.
All three of these categories support a Waldhausen category structure in which inclusions of $G$--invariant subcomplexes up to isomorphism are the cofibrations and controlled homotopy equivalences are the weak equivalences, see \cite[Corollary~3.22]{UW}.
  
Let $X$ be a $G$--CW--complex and let $M$ be a metric space with free, isometric $G$--action. Define $\frakC_{bdd}(M)$ to be the collection of all subsets $C \subset M \times M$ which are of the form
\begin{equation*}
 C = \{ (m,m') \in M \times M \mid d(m,m') \leq \alpha \}
\end{equation*}
for some $\alpha \geq 0$. Define further $\frakC_{Gcc}(X)$ to be the collection of all $C \subset (X \times [1,\infty[) \times (X \times [1,\infty[)$ which satisfy the following:
\begin{enumerate}
 \item For every $x \in X$ and every $G_x$--invariant open neighborhood $U$ of $(x,\infty)$ in $X \times [1,\infty]$, there exists a $G_x$--invariant open neighborhood $V \subset U$ of $(x,\infty)$ such that $(((X \times [1,\infty[) \setminus U) \times V) \cap C = \emptyset$.
 \item Let $p_{[1,\infty[} \colon X \times [1,\infty[ \to [1,\infty[$ be the projection map. Equip $[1,\infty[$ with the Euclidean metric. Then there exists some $B \in \frakC_{bdd}([1,\infty[)$ such that $C \subset p^{-1}_{[1,\infty[}(B)$.
 \item $C$ is symmetric, $G$--invariant and contains the diagonal.
\end{enumerate}
Next define $\frakC(M,X)$: Let $p_M \colon M \times X \times [1,\infty[ \to M$ and $p_{X \times [1,\infty[} \colon M \times X \times [1,\infty[ \to X \times [1,\infty[$ denote the projection maps. Then $\frakC(M,X)$ is the collection of all subsets $C \subset (M \times X \times [1,\infty[)^2$ which are of the form
\begin{equation*}
 C = p_M^{-1}(B) \cap p_{X \times [1,\infty[}^{-1}(C')    
\end{equation*}
for some $B \in \frakC_{bdd}(M)$ and $C' \in \frakC_{Gcc}(X)$.
  
Finally, define $\frakS(M,X)$ to be the collection of all subsets $S \subset M \times X \times [1,\infty[$ which are of the form $S = K \times [1,\infty[$ for some $G$--compact subset $K \subset M \times X$.
  
All these data combine to a coarse structure
\begin{equation*}
 \JJ(M,X) := (M \times X \times [1,\infty[, \frakC(M,X), \frakS(M,X))
\end{equation*}
which serves to define the ``obstruction category" $\cR^G_f(W,\JJ(G,\EGF{G}{\cF})),h)$, cf.~\cite[Example~2.2 and Definition~6.1]{UW}. The spectrum $\FF(G,W,\EGF{G}{\cF})$ alluded to above is the non-connective $K$--theory spectrum of $\cR^G_f(W,\JJ(G,\EGF{G}{\cF}))$ with respect to the $h$--equivalences, cf.~\cite[Section~5]{UW}. By \cite[Corollary~6.11]{UW}, a group $G$ satisfies the Farrell--Jones Conjecture with coefficients in $A$--theory with respect to $\cF$ if and only if $\FF(G,W,\EGF{G}{\cF})$ is weakly contractible for every free $G$--CW--complex $W$.

Suppose now that $G$ is a DFHJ-group. By definition, there exists some $N \in \NN$ such that for every $n \in \NN$ there is a homomorphism $\pi_n \colon G \to F_n$ to a finite group with the property that for every Dress subgroup $D \leq F_n$ there exist
\begin{enumerate}
 \item a compact, contractible metric space $X_{n,D}$ such that for every $\epsilon > 0$ there is an $\epsilon$--controlled domination of $X_{n,D}$ by an at most $N$--dimensional finite simplicial complex;
 \item a homotopy coherent $G$--action $\Gamma_{n,D}$ on $X_{n,D}$;
 \item a $\pi_n^{-1}(D)$--simplicial complex $\Sigma_{n,D}$ of dimension at most $N$ whose isotropy is contained in $\cF$;
 \item a $\pi_n^{-1}(D)$--equivariant map $f_{n,D} \colon G \times X_{n,D} \to \Sigma_{n,D}$ such that
 \begin{itemize}
  \item for all $g \in G$, $x \in X_D$ and $s \in S^n$
   \[
    d^{l^1}\big( f_{n,D}(g,x),f_{n,D}(gs^{-1},\Gamma_D(s,x)) \big) \leq \frac{1}{n},
   \]
  \item for all $g \in G$, $x \in X_D$ and $s_0,\ldots,s_n \in S^n$
   \[
    \diam\big\{ f_{n,D}(g,\Gamma_{n,D}(s_n,t_n,\ldots,s_0,x)) \,\big|\, t_1,\ldots,t_n \in [0,1] \big\} \leq \frac{2}{n}.
   \]
 \end{itemize}
\end{enumerate}
Assume we have chosen all of this. Then the proof is organized around the following diagram, in which we abbreviate $E := \EGF{G}{\cF}$ (further explanations follow below):
\begin{equation}\label{eq:outline.of.proof}
 \begin{tikzpicture}
    \matrix (m) [matrix of math nodes, column sep=2em, row sep=2em, text depth=.5em, text height=1em, ampersand replacement=\&]
	{(\cR^G_f(W, \JJ(G,E)),h)			                     \& (\cR^G_{fd}(W, \JJ(G,E)),h)                     \\
	 (\cR^G_f(W, \JJ((G)_n,E)),h)                        \& (\cR^G_{fd}(W, \JJ((G)_n,E)),h^{fin})         \\
	 (\cR^G_f(W, \JJ((S_n \times G)_n,E)), h)                     \&                                               \\
	 (\cR^G_{fd}(W, \JJ((X_n \times G)_n,E)), h^{fin}) \& (\cR_{fd}^G(W, \JJ((\Sigma_n \times G)_n, E)), h^{fin})\\};
	\path[->]
     (m-1-1) edge node[above]{$i$} (m-1-2)
     (m-1-1) edge node[left]{$\Delta_f$} (m-2-1)
     (m-1-2) edge node[left]{$\Delta_{fd}$} (m-2-2)
     (m-2-1) edge node[above]{$j$} (m-2-2)
     (m-3-1) edge node[right]{$P_S$} (m-2-1)
     (m-4-1) edge node[below right]{$P_X$} (m-2-2) edge node[above]{$F$} (m-4-2)
     (m-4-2) edge node[right]{$P_\Sigma$} (m-2-2);
    \path[dashed][->]
     (m-1-1.180) edge[bend right=30] node[left]{$\trans_1$} (m-3-1.170)
     (m-3-1) edge node[left]{$\trans_2$}  (m-4-1);
   \end{tikzpicture}
\end{equation}
Diagram~\eqref{eq:outline.of.proof} involves some additional notation which we explain first.

Suppose that $(M_n)_n$ is a sequence of metric spaces with a free, isometric $G$--action. Let $X$ be a $G$--CW--complex. Following \cite[Section~7]{UW}, define the coarse structure
\begin{equation*}
 \JJ((M_n)_n,X) := \big( \coprod_n M_n \times X \times [1,\infty[, \frakC((M_n)_n,X), \frakS((M_n)_n,X) \big)
\end{equation*}
as follows: Members of $\frakC((M_n)_n,X)$ are of the form $C = \coprod_n C_n$ with $C_n \in \frakC(M_n,X)$, and we additionally require that $C$ satisfies the \emph{uniform metric control conditon}:
There is some $\alpha > 0$, independent of $n$, such that for all $((m,x,t)$, $(m',x',t')) \in C$ we have $d(m,m') < \alpha$.
Members of $\frakS((M_n)_n,X)$ are sets of the form $T = \coprod_n T_n$ with $T_n \in \frakS(M_n,X)$.
The resulting category $\cR^G(W,\JJ((M_n)_n,X))$ is canonically a subcategory of the product category $\prod_n \cR^G(W,\JJ(M_n,X))$.
  
Some instances of the category $\cR^G(W,\JJ((M_n)_n,X))$ we consider in diagram \eqref{eq:outline.of.proof} come equipped with another notion of weak equivalence:
Let $(Y_n)_n$ be an object of $\cR^G(W, \JJ((M_n)_n,E))$.
For $\nu \in \NN$, we denote by $(-)_{n > \nu}$ the endofunctor which sends $(Y_n)_n$ to the sequence $(\widetilde{Y}_n)_n$ with $\widetilde{Y}_n = \ast$ for $n \leq \nu$ and $\widetilde{Y}_n = Y_n$ for $n > \nu$.
A morphism $(f_n)_n \colon (Y_n)_n \rightarrow (Y_n')_n$ is an \emph{$h^{fin}$--equivalence} if there is some $\nu \in \NN$, such that $(f_n)_{n>\nu}\colon (Y_n)_{n>\nu} \rightarrow (Y_n')_{n>\nu}$ is an $h$--equivalence.

Next, we define the families of metric spaces that we plug into the coarse structure $\JJ(-,E)$.
As a shorthand, we denote the preimage $\pi_n^{-1}(D)$ of any Dress group $D \leq F_n$ by $\overline{D}$.
\begin{enumerate}
 \item The family $(G)_n$ is the constant family in which we equip each component with the word metric on $G$ with respect to $S$.
 \item Let $\dress_n$ denote the family of Dress subgroups of $F_n$. Then define the $G$--space $S_n := \coprod_{D \in \dress_n} G/\overline{D}$. 
  We equip $S_n \times G$ with the diagonal $G$--action and the quasi-metric $d_{S_n}$ given by
  \[
   d_{S_n}((g_1\overline{D}, g_2), (h_1\overline{D'}, h_2)) :=
    \begin{cases}
     d_G(g_2,h_2) & \overline{D} = \overline{D'}, g_1\overline{D} = h_1\overline{D}, \\
     \infty & \text{otherwise.}
    \end{cases}
  \]
 \item The space $X_n$ is defined to be $\coprod_{D \in \dress_n} X_{n,D} \times G/\overline{D}$.
  Define for each $D \in \dress_n$ the constant $\Lambda_{n,D}$ as in \cite[Section~6]{ELPUW}.
  We equip $X_n \times G$ with the $G$--action $\gamma \cdot (x, g_1\overline{D}, g_2) := (x, \gamma g_1\overline{D}, \gamma g_2)$ and the metric $d_{X_n}$ given by
 \begin{align*}
  &d_{X_n}((x, g_1\overline{D}, g_2),(y, h_1\overline{D'}, h_2) \\
  & :=
   \begin{cases}
    d_G(g_2,h_2) + d_{\Gamma_{n,D},S^n,n,\Lambda_{n,D}}((x,g_2), (y,h_2)) & \overline{D} = \overline{D'}, g_1\overline{D} = g_2\overline{D} \\
    \infty & \text{otherwise,} \\
   \end{cases}
 \end{align*}
 where we use the metric $d_{\Gamma_{n,D},S^n,n,\Lambda_{n,D}}$ defined in \cite[Definition~2.9]{Wegner(2015)}.
 \item Finally, $\Sigma_n$ is defined to be the $G$--simplicial complex $\coprod_{D \in \dress_n} G \times_{\overline{D}} \Sigma_{n,D}$, equipped with the metric $n \cdot d^{\ell^1}$, where $d^{\ell^1}$ denotes the $\ell^1$--metric of a simplicial complex.
\end{enumerate}
When crossing one of the above metric spaces with the group $G$, we regard the resulting space as a metric space by equipping it with the sum of the given metric and the word metric on $G$.
This defines all categories appearing in diagram~\eqref{eq:outline.of.proof}.

Let us now define the functors connecting these categories.
The functors $i$ and $j$ are the exact inclusions functors from finite to finitely dominated objects.
The functors $\Delta_f$ and $\Delta_{fd}$ are the diagonal functors sending a given object $Y$ to the constant sequence $(Y)_n$. Note that $j \circ \Delta_f = \Delta_{fd} \circ i$.
The functors $P_S$, $P_X$ and $P_\Sigma$ are induced the projection maps from $S_n \times G$, $X_n \times G$ and $\Sigma_n \times G$ to $G$.
The functor $F$ is induced by the sequence of maps $(f_n \colon X_n \times G \to \Sigma_n \times G)_n$, which we define by
\[
 f_n(x, g_1\overline{D}, g_2) := (g_1, f_{n,D}(g_1^{-1}g_2, x)).
\]
The formula uses secretly the identification $G/\overline{D} \times G  \cong G \times_{\overline{D}} G$.
Using the contracting properties \Cref{def:dfhj.group}~\eqref{it:dfhj.group4}, one checks that the functor $F$ is well-defined, the proof being completely analogous to \cite[Lemma~6.11]{ELPUW}. Moreover, $P_X = P_\Sigma \circ F$.

We make the following claims:

\begin{proposition}\label{prop:properties.of.main.diagram}
\ 
\begin{enumerate}
 \item\label{it:properties.of.main.diagram1} After applying $K$--theory, the dashed arrow $\trans_1$ exists such that $K_m(\Delta_f) = K_m(P_S) \circ \trans_1$.
 \item\label{it:properties.of.main.diagram2} After applying $K$--theory, the dashed arrow $\trans_2$ exists such that $K_m(j \circ P_S) = K_m(P_X) \circ \trans_2$.
 \item\label{it:properties.of.main.diagram3} The $K$--theory of $(\cR_{fd}^G(W, \JJ((\Sigma_n \times G)_n, E)), h^{fin})$ is trivial.
 \item\label{it:properties.of.main.diagram4} $K_m(\Delta_{fd} \circ i)$ is injective for all $m$.
\end{enumerate}
\end{proposition}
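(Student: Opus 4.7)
The plan is to follow the template of \cite[Section~4.2]{Wegner(2015)}, combining the Farrell--Hsiang transfer from \cite{UW} with the transfer-reducibility transfer from \cite{ELPUW}. For \eqref{it:properties.of.main.diagram1}, I would construct $\trans_1$ by invoking Dress' induction theorem, which expresses the identity of the Swan ring of $F_n$ as an integral linear combination of transfers indexed by the Dress subgroups $D \leq F_n$. On controlled retractive spaces, each summand is realized by the restriction/induction pair along $\overline{D} \hookrightarrow G$, and the quasi-metric $d_{S_n}$ on $S_n \times G$ is designed precisely so these assemble into a single $K$--theory class whose composition with $K_m(P_S)$ recovers $K_m(\Delta_f)$, essentially as in \cite[Theorem~7.4]{UW}.

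For \eqref{it:properties.of.main.diagram2}, I would construct $\trans_2$ as the homotopy coherent transfer along the contractible transfer spaces $X_{n,D}$, following the proof of \cite[Lemma~6.11]{ELPUW}. Given $(Y_n)_n$ controlled over $(S_n \times G)_n$, one smears each component using a finite simplicial dominating model of $X_{n,D}$ supplied by the $\epsilon$--controlled domination, with metric control in the $G$--direction provided by the homotopy coherent action $\Gamma_{n,D}$ in the metric $d_{\Gamma_{n,D},S^n,n,\Lambda_{n,D}}$ of \cite[Definition~2.9]{Wegner(2015)}. Compactness of $X_{n,D}$ yields a finitely dominated object in the target, and projection to $G$ recovers $(Y_n)_n$ up to an $h^{fin}$--equivalence, giving the required factorization.

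Claim \eqref{it:properties.of.main.diagram3} is the vanishing theorem and I expect it to be the principal obstacle. The rescaled $\ell^1$--metric $n \cdot d^{\ell^1}$ on $\Sigma_n$ combined with the uniform metric control condition forces $K$--theory generators to shrink into individual simplices as $n$ grows; this, together with the uniform dimension bound $\dim \Sigma_{n,D} \leq N$ and the fact that all cell stabilizers of $\Sigma_n$ lie in $\cF$, is the input for an inductive Mayer--Vietoris argument over the skeleta followed by an Eilenberg swindle on each orbit, paralleling \cite[Proposition~4.13]{Wegner(2015)} and \cite[Section~5]{ELPUW}. The delicate point is to organize the swindle uniformly in $n$ while respecting both $G$--equivariance and the uniform metric control, which is where the Farrell--Hsiang and transfer-reducibility methods must be genuinely fused.

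For \eqref{it:properties.of.main.diagram4}, the functor $\Delta_f$ admits a strict retraction given by evaluation at any fixed index, which preserves $h$--equivalences; hence $K_m(\Delta_f)$ is split injective. Since $i$ induces an isomorphism on non-connective $K$--theory by a Cardenas--Pedersen cofinality argument, and the localization from $h$ to $h^{fin}$ does not annihilate the image of $K_m(\Delta_f)$ (its fiber being controlled by sequences that vanish outside a tail $(Y_n)_{n>\nu}$, and hence interacting trivially with the subcategory of constant sequences), the injectivity of $K_m(\Delta_{fd} \circ i) = K_m(j \circ \Delta_f)$ follows by an argument analogous to the corresponding step in \cite[Section~4.2]{Wegner(2015)}.
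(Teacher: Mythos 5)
Your treatment of claims \eqref{it:properties.of.main.diagram1}--\eqref{it:properties.of.main.diagram3} follows essentially the route the paper takes: \eqref{it:properties.of.main.diagram1} is exactly the Swan-group/Dress-induction transfer of \cite[Proposition~9.2]{UW} (the paper simply cites this), \eqref{it:properties.of.main.diagram3} is the squeezing/vanishing theorem already established in \cite[Section~10]{UW} (again cited, not reproved --- so it is not the ``principal obstacle'' you anticipate; the only genuinely new work in the proposition is \eqref{it:properties.of.main.diagram2}), and your sketch of $\trans_2$ matches the paper's construction: decompose $Y_n = \coprod_{D\in\dress_n} Y_{n,D}$ along the metric components of $d_{S_n}$ and apply the transfer $\trans^{\alpha,d}_{X_{n,D}}$ of \cite[Definition~7.9]{ELPUW} componentwise. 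One technical point you elide in \eqref{it:properties.of.main.diagram2}: this transfer is only functorial after restricting to the subcategories $\cR^G_f(W,\JJ((S_n\times G)_n,E))_{\alpha,d}$ of $\alpha$--controlled, $d$--dimensional objects with cellwise $0$--controlled morphisms; the paper spells this out, and without some such restriction the assignment $Y \mapsto \trans(Y)$ is not a functor.

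There is, however, a genuine gap in your argument for \eqref{it:properties.of.main.diagram4}. Evaluation at a fixed index does retract $\Delta_{fd}$ when the target carries the $h$--equivalences, but it does \emph{not} descend to $(\cR^G_{fd}(W,\JJ((G)_n,E)),h^{fin})$, since an $h^{fin}$--equivalence may be arbitrary at any fixed index. Your attempt to repair this --- that the fiber of the localization $h \to h^{fin}$ consists of sequences supported on finite index sets and therefore ``interacts trivially'' with constant sequences --- is precisely the step that cannot be waved through. To conclude that the diagonal class $(x,x,\dots)$ survives to the $h^{fin}$--localization, one needs a map out of $K_m(\cR^G_{fd}(W,\JJ((G)_n,E)),h)$ that detects the diagonal and kills the image of the fiber; concretely, one needs the identification of the $K$--theory of this product-type category with the product $\prod_n K_m$ (so that the $h^{fin}$--localized group surjects onto $\bigl(\prod_n K_m\bigr)/\bigl(\bigoplus_n K_m\bigr)$, where the diagonal is visibly injective). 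The commutation of non-connective $K$--theory with such infinite products is a substantial theorem --- the $A$--theoretic analogue of Carlsson's result for additive categories --- and it is exactly the content of \cite[Lemma~6.12]{ELPUW}, which the paper invokes for this claim. It also explains why the target of $\Delta_{fd}\circ i$ must consist of finitely \emph{dominated} objects: the product formula fails for the categories of finite objects, which is the reason the functors $i$ and $j$ appear in diagram~\eqref{eq:outline.of.proof} at all. Relatedly, your assertion that $i$ induces an equivalence on non-connective $K$--theory by cofinality is not needed for the injectivity statement and is not how the paper argues; what is required is only that $K_m(j\circ\Delta_f)=K_m(\Delta_{fd}\circ i)$ be injective, which follows once the product formula is in place.
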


\Cref{thm:afjc.for.dfhjgroups} follows from \Cref{prop:properties.of.main.diagram} by an easy diagram chase.

\begin{proof}[Proof of \Cref{prop:properties.of.main.diagram}]
 Claim~\eqref{it:properties.of.main.diagram1} is an immediate consequence of \cite[Proposition~9.2]{UW}.
 Claim~\eqref{it:properties.of.main.diagram3} is established in \cite[Section~10]{UW}.
 Claim~\eqref{it:properties.of.main.diagram4} is \cite[Lemma~6.12]{ELPUW}.
 So all that is left to show is claim~\eqref{it:properties.of.main.diagram2}.

 The map $\trans_2$ arises as a slight modification of the transfer constructed in \cite[Section~7]{ELPUW}, whose notation we will also use in the following discussion.
 
 Let $\cR^G_f(W,\JJ((S_n \times G)_n, E))_{\alpha,d}$ denote the subcategory of $\cR^G_f(W,\JJ((S_n \times G)_n, E))$ containing only those objects $(Y_n,\kappa_n)_n$ such that $Y_n$ has dimension at most $d$ and is $\alpha$--controlled over $S_n \times G$, together with morphisms $(\phi_n \colon (Y_n,\kappa_n) \to (Y_n',\kappa_n'))_n$ which are \emph{cellwise $0$--controlled} in the following sense:
 Each $\phi_n$ is a regular map (ie. it maps open cells onto open cells), and for each cell $c \in \cells Y_n$, we have $\kappa_n'(\phi_n(c)) = \kappa_n(c)$. Note that such morphisms automatically satisfy the uniform metric control condition.
 
 Arguing as in \cite[Section~7.1]{ELPUW}, we observe that it suffices to construct compatible transfers on each $\cR^G_f(W,\JJ((S_n \times G)_n, E))_{\alpha,d}$ individually.
 
 Let $(Y_n,\kappa_n)_n$ be an object in $\cR^G_f(W,\JJ((S_n \times G)_n, E))_{\alpha,d}$.
 By the definition of the metric $d_{S_n}$, the complex $Y_n$ decomposes $G$--equivariantly as $Y_n = \coprod_{D \in \dress_n} Y_{n,D}$, with $Y_{n,D}$ living over the metric component $G/\pi_n^{-1}(D) \times G$.
 Let $\kappa_{n,D}$ denote the restriction of $\kappa_n$ to the set of cells of $Y_{n,D}$.
 Then define
 \[
  \trans^{\alpha,d}_n(Y_n) := \coprod_{D \in \dress_n} \trans^{\alpha,d}_{X_{n,D}}(Y_{n,D}),
 \]
 cf.~\cite[Definition~7.9]{ELPUW}.
 The control map $\trans^{\alpha,d}_n(\kappa_n)$ of $\trans^{\alpha,d}_n(Y_n)$ is defined as in {\it loc.~cit.} (formula directly before Lemma~7.10), replacing $G$ by $S_n \times G$.
 Then the obvious analog of \cite[Lemma~7.10]{ELPUW} holds, so that
 \[
  \trans^{\alpha,d}((Y_n,\kappa_n)_n) := (\trans^{\alpha,d}_n(Y_n),\trans^{\alpha,d}_n(\kappa_n))_n
 \]
 is indeed an object in $\cR^G_{fd}(W,\JJ(X_n \times G)_n,E))$.
 By the obvious analog of \cite[Lemma~7.11]{ELPUW}, $\trans^{\alpha,d}$ defines a functor 
\[ 
  \trans^{\alpha,d} \colon \cR^G_f(W,\JJ((S_n \times G)_n, E))_{\alpha,d} \to \cR^G_{fd}(W,\JJ(X_n \times G)_n,E)).
 \]
 Since we leave the $S_n \times G \times E \times [1,\infty[$--component of each $\kappa_n$ unchanged, the rest of \cite[Section~7]{ELPUW} carries over to show the existence of the map $\trans_2$, and thus claim~\eqref{it:properties.of.main.diagram2}.
\end{proof}

\begin{remark}\label{rem:deloopings}
 In fact, the discussion we have given so far only establishes the vanishing of $K_m(\cR^G_f(W,\JJ(E)),h)$ for $m > 0$.
 In order to show vanishing in all degrees, we need to consider appropriate deloopings constructed by introducing another metric coordinate $\RR^k$. Since this coordinate remains unchanged throughout, the previous discussion applies verbatim. Cf.~also \cite[Section~9]{UW} and the discussion in Section~6 of \cite{ELPUW}.
\end{remark}

\section{Proof of the main theorem}

As in \cite[Section~3]{Wegner(2015)}, the first step in proving \Cref{maintheorem} lies in reducing the general theorem to some special cases. For any non-zero algebraic number $w$, set $G_w := \ZZ[w,w^{-1}] \rtimes_{\cdot w} \ZZ$.

\begin{lemma}\label{lem:reduction.to.gw}
 If $G_w$ satisfies the $A$--theoretic Farrell--Jones Conjecture with coefficients and finite wreath products for every non-zero algebraic number $w$, then so does every virtually solvable group.
\end{lemma}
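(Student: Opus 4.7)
The plan is to follow the strategy of \cite[Section~3]{Wegner(2015)} essentially verbatim. That section carries out the analogous reduction in $K$-theory, and the argument is purely a combination of (i) structural group theory for finitely generated solvable groups and (ii) the standard inheritance properties of the conjecture. Only (ii) is sensitive to the replacement of $K$-theory by $A$-theory, and all the required inheritance properties of the $A$-theoretic Farrell--Jones Conjecture with coefficients and finite wreath products---closure under subgroups, under extensions (with controlled behaviour on preimages of virtually cyclic subgroups in the quotient), under directed colimits, and under wreath products with finite groups---are established in \cite[Theorem~1.1]{ELPUW}. It therefore suffices to explain how the pieces fit together.

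First, since the conjecture passes to directed colimits, I may assume that $G$ is finitely generated. By Mal'cev's theorem, such a group contains a torsion-free solvable subgroup $H$ of finite index, and $G$ embeds into the wreath product $H \wr (G/H)$; closure under wreath products with finite groups and under subgroups thus reduces the problem to finitely generated torsion-free solvable groups.

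The core step is then the further reduction to the groups $G_w$. Working up the derived series and iterating extension inheritance together with the Transitivity Principle \cite[Proposition~11.2]{UW}, one reduces to the case of groups of the form $A \rtimes \ZZ$, where $A$ is a finitely generated $\ZZ[t,t^{-1}]$-module (with $t$ acting via the chosen automorphism). A structure-theoretic analysis of such modules---decomposing $A$ into cyclic pieces $\ZZ[t,t^{-1}]/(p(t))$ and factoring $p$ over the algebraic numbers---exhibits each such semidirect product as built, via extensions by virtually abelian and virtually cyclic groups, from subgroups of $G_w$'s for non-zero algebraic $w$. The finitely generated torsion-free abelian base cases that appear are subgroups of some $\ZZ^n$ (equivalently, virtually crystallographic) and are known to satisfy the conjecture relative to virtually cyclic subgroups.

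The main obstacle is the bookkeeping in this structural reduction: at each stage one has to verify that the intermediate groups lie within the inheritance class being tracked, and that the Transitivity Principle is invoked with an appropriate family. All of this is carried out in detail in \cite[Section~3]{Wegner(2015)}, and since the argument there is formal---consisting only of group-theoretic manipulations combined with applications of the inheritance properties and the Transitivity Principle---it transfers to the $A$-theoretic setting without modification once the inheritance properties from \cite[Theorem~1.1]{ELPUW} are in place.
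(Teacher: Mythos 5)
Your overall strategy is the same as the paper's: both proofs observe that the reduction of \cite[Section~3]{Wegner(2015)} carries over verbatim once the relevant input is available in $A$--theory, and both defer the group-theoretic bookkeeping to that reference. The one substantive inaccuracy is your claim that, beyond the inheritance properties, the argument is ``purely formal.'' Wegner's reduction does not terminate only in subgroups of the $G_w$ and in torsion-free abelian groups; it also requires certain groups to be supplied as genuine base cases, namely semidirect products $A \rtimes \ZZ$ with $A$ torsion abelian, the wreath product $\ZZ \wr \ZZ$, and virtually abelian groups. None of these is a consequence of the closure properties alone: in the paper they are settled by the hyperbolic and $\mathrm{CAT}(0)$ cases of the $A$--theoretic conjecture \cite[Theorem~1.1(i)]{ELPUW} (following \cite[Lemmas~4.1 and 4.3]{Farrell-Linnell(2003)}) and by \cite[Corollary~11.11]{UW}. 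Your sketch records only the torsion-free abelian base cases, so as written the verification is incomplete; had one of these extra base cases been unavailable in $A$--theory, the reduction would break down. Since the missing ingredients are in fact established in the cited sources, the lemma itself is not in danger, but you should list them explicitly rather than subsume them under ``formal'' manipulations.
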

\begin{proof}
 We claim that the arguments in \cite[Section~3]{Wegner(2015)} carry over to $A$--theory.
 Indeed, the argument relies only on the following statements about the Farrell--Jones Conjecture with coefficients and finite wreath products:
 \begin{enumerate}
   \item The class of groups satisfying the conjecture has the following closure properties \cite[Theorem~1.1(ii)]{ELPUW}:
    \begin{itemize}
     \item If a group satisfies the conjecture, so does every subgroup.
     \item If two groups satisfy the conjecture, so do their direct and free products.
     \item If $\{ G_i \}_{i \in I}$ is a directed system of groups satisfying the conjecture, so does the colimit.
     \item If $p \colon G \twoheadrightarrow Q$ is an epimorphism, and $Q$ as well as every preimage $p^{-1}(C)$ of virtually cyclic subgroups of $Q$ satisfy the conjecture, so does $G$.
    \end{itemize}
   \item The following groups satisfy the conjecture:
    \begin{itemize}
     \item Semidirect products $A \rtimes \ZZ$ with $A$ torsion abelian: This case follows from the case of hyperbolic groups \cite[Theorem~1.1(i)]{ELPUW}, cf.~\cite[Lemma~4.1]{Farrell-Linnell(2003)}.
     \item The wreath product $\ZZ \wr \ZZ$: This is, for example, a directed colimit of $\mathrm{CAT}(0)$--groups, and hence satisfies the conjecture by \cite[Theorem~1.1(i)]{ELPUW}. Alternatively, one can argue as in \cite[Lemma~4.3]{Farrell-Linnell(2003)}.
     \item Virtually abelian groups \cite[Corollary~11.11]{UW}, \cite[Theorem~1.1(i)]{ELPUW}.
    \end{itemize}
 \end{enumerate}
 For details, we refer to \cite[Section~3]{Wegner(2015)}.
\end{proof}

If $w$ is a root of unity, $G_w$ is a virtually abelian group (cf.~\cite[Lemma~5.32]{Wegner(2015)}) and satisfies the $A$--theoretic Farrell Jones Conjecture with coefficients and finite wreath products by \cite[Corollary~11.11]{UW}.
So we may assume that $w$ is not a root of unity in the sequel.

We recall some notation from \cite[Section~5]{Wegner(2015)}.
In what follows, we fix a non-zero algebraic number $w$ which is not a root of unity.
Let $\cO$ be the ring of integers in $\QQ(w)$.
Define the ring $\cO_w$ to be
\[
 \cO_w := \{ x \in \QQ(w) \mid v_\frakp(x) \geq 0 \text{ for all prime ideals $\frakp \subset \cO$ with $v_\frakp(w) = 0$.} \},
\]
so that $\cO \subseteq \cO_w$ and $w$, $w^{-1} \in \cO_w$.

For $s \in \NN$ we define $t_w(s) \geq 0$ to be the number determined by
\[
 t_w(s)\ZZ = \{ z \in \ZZ \mid w^z \equiv 1 \mod s\cO_w \}.
\]

\begin{lemma}\label{lem:images.of.dress.groups}
Let $q_1$, $q_2$ be prime numbers satisfying $q_1 \neq q_2$ and $v_\frakp(w) = 0$ for all prime factors $\frakp$ of $q_1$ or $q_2$ in $\cO$. Let $m_1, m_2$ be natural numbers.

Consider the finite group $F := ( \cO_w / q_1^{m_1} q_2^{m_2} \cO_w ) \rtimes \ZZ / t_w(q_1^{m_1} q_2^{m_2}) \ZZ$.

For every Dress group $D \leq F$, there exists $i \in \{ 1,2 \}$ such that the image of $D$ under the canonical projection
$\eta_i \colon F \twoheadrightarrow \cO_w / q_i^{m_i} \cO_w \rtimes \ZZ / t_w(q_i^{m_i}) \ZZ$ is hyperelementary.
\end{lemma}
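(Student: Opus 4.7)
The plan is to exploit the Chinese Remainder decomposition $\cO_w/q_1^{m_1}q_2^{m_2}\cO_w \cong A_1 \times A_2$, where $A_i := \cO_w/q_i^{m_i}\cO_w$ has order a power of $q_i$ (since $q_i$ is coprime to the primes inverted in passing from $\cO$ to $\cO_w$), together with $t_w(q_1^{m_1}q_2^{m_2}) = \lcm(T_1,T_2)$ for $T_i := t_w(q_i^{m_i})$. This presents $F$ as $(A_1 \times A_2) \rtimes \ZZ/T\ZZ$ with $T = \lcm(T_1, T_2)$, and $\eta_i$ becomes the canonical projection onto $F_i := A_i \rtimes \ZZ/T_i\ZZ$. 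For any Dress subgroup $D \leq F$, let $p$ and $q$ be the primes arising in a Dress series $P \unlhd H \unlhd D$; since $q_1 \neq q_2$, we may pick $i \in \{1,2\}$ with $q_i \neq p$, and the goal is to show that $\eta_i(D)$ is $q$--hyperelementary.

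Before anything else, normalize the Dress series so that $|H/P|$ is coprime to $p$: the cyclic group $H/P$ has a unique Sylow $p$--subgroup, whose preimage in $H$ is characteristic in $H$ and hence normal in $D$, so we may replace $P$ by this enlarged Sylow $p$--subgroup of $H$. Write $\bar D := \eta_i(D)$, $\bar H := \eta_i(H)$, $\bar P := \eta_i(P)$. Since $\bar P$ is a $p$--group, $|A_i|$ is a power of $q_i$, and $p \neq q_i$, the intersection $\bar P \cap A_i$ is trivial. Thus $\bar P$ embeds into the cyclic group $\ZZ/T_i\ZZ$ under the projection $F_i \twoheadrightarrow \ZZ/T_i\ZZ$, so $\bar P$ itself is cyclic.

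The heart of the argument is to upgrade this to: $\bar H$ is cyclic. For any $x \in \bar P$ and $y \in \bar H$, the commutator $[y,x]$ lies in $\bar P$ by normality of $\bar P$ in $\bar H$, and simultaneously in $[F_i, F_i] \leq A_i$ since $F_i/A_i$ is abelian. Therefore $[y,x] \in \bar P \cap A_i = \{0\}$, so $\bar P$ is central in $\bar H$. Since $|\bar P|$ and $|\bar H/\bar P|$ are coprime, Schur--Zassenhaus produces a cyclic complement $\bar K \cong \bar H/\bar P$, and centrality gives $\bar H = \bar P \times \bar K$; a direct product of two cyclic groups of coprime orders is cyclic, so $\bar H$ is cyclic.

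To finish, note that $\bar D/\bar H$ is a $q$--group as a quotient of $D/H$. Decompose the cyclic group $\bar H$ as $\bar H = \bar H_{q'} \times \bar H_q$ into its $q'$-- and $q$--parts; the factor $\bar H_{q'}$ is characteristic in $\bar H$, hence normal in $\bar D$. The quotient $\bar D/\bar H_{q'}$ has order $|\bar H_q| \cdot |\bar D/\bar H|$, which is a $q$--power, so $\bar H_{q'}$ is a normal cyclic subgroup of $\bar D$ of $q$--power index and of order coprime to $q$, making $\bar D$ $q$--hyperelementary. The main obstacle (and essentially the only step requiring genuine insight) is the centrality argument: this is the point where the arithmetic of $F_i$—namely that the abelian base $A_i$ has $q_i$--power order while the quotient $F_i/A_i$ is abelian—forces the $p$--part of $\bar H$ to decouple from the rest, upgrading the easy observation that $\bar P$ is cyclic to the much stronger statement that $\bar H$ is cyclic, which in turn is what distinguishes the image of a Dress group from being merely Dress again.
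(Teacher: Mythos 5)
Your proof is correct and follows essentially the same route as the paper: choose $i$ with $q_i\neq p$, observe that the image of $P$ meets the abelian $q_i$--group $\cO_w/q_i^{m_i}\cO_w$ trivially and hence is cyclic, deduce that the image of $H$ is cyclic, and conclude hyperelementarity. In fact your write-up supplies the one step the paper asserts without justification (that $\eta_i(H)$ is cyclic): your observation that $[\eta_i(H),\eta_i(P)]\subseteq \eta_i(P)\cap A_i=\{0\}$, so that $\eta_i(P)$ is central and the coprime extension splits as a direct product, is exactly the missing argument.
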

\begin{proof}
 Let $D$ be a Dress subgroup of $F$. Then $D$ fits into a normal series $P \unlhd H \unlhd D$ such that $P$ is a $p$--group, $D/H$ is a $p'$--group, $P$ is normal in $D$ and $\abs{H/P}$ is coprime to both $p$ and $p'$ \cite[Lemma~5.1]{Winges(2015)}.

 The prime $p$ cannot be $q_1$ and $q_2$ at the same time; without loss of generality, assume that $p \neq q_1$.
 Set $t := t_w(q_1^{m_1}q_2^{m_2})$ and $t_1 := t_w(q_1^{m_1})$.
 Consider the normal subgroup $N := q_1^{m_1} \cO_w / q_1^{m_1}q_2^{m_2} \cO_w \rtimes t_1 \ZZ / t\ZZ$ and let $\eta_1$ denote the projection map
 \[
  \eta_1 \colon F \twoheadrightarrow F / N \cong \cO_w / q_1^{m_1} \cO_w \rtimes \ZZ / t_1\ZZ.
 \]
 Then $\eta_1(P) \cap \cO_w / q_1^{m_1} \cO_w = \{ 0 \}$ since the latter is a $q_1$--group and $p \neq q_1$.
 Hence, $\eta_1(P)$ is mapped isomorphically to a subgroup of $\ZZ / t_1\ZZ$ by the projection map $\cO_w / q_1^{m_1} \cO_w \rtimes \ZZ / t_1\ZZ \twoheadrightarrow \ZZ/t_1\ZZ$. So $\eta_1(P)$ is cyclic.
 Since $p$ is coprime to $\abs{ H/P }$ and $H/P$ is cyclic, the image $\eta_1(H)$ is also cyclic.
 It follows that $\eta_1(D)$ is hyperelementary.
\end{proof}

\begin{proposition}\label{prop:gw.is.dfhj}
 Let $w \neq 0$ be an algebraic number which is no root of unity.
 Then $G_w = \ZZ[w, w^{-1}] \rtimes \ZZ$ is a DFHJ--group with respect to the family of virtually abelian subgroups.
\end{proposition}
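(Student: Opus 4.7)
The plan is to invoke Wegner's construction for $G_w$ from \cite[Section~5]{Wegner(2015)} and verify that, after the notational translation discussed in \Cref{rem:comparison.contracting.conditions}, it yields the data required by \Cref{def:dfhj.group}. Since Wegner's own definition of a Farrell--Hsiang--Jones group packages exactly the same objects (a sequence of finite quotients, transfer spaces, homotopy coherent actions, and equivariant maps into simplicial complexes with controlled isotropy), the content of the proof is to transport his data into our setting and to check the two specific contracting estimates.

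First, for each $n \in \NN$ I would choose two distinct primes $q_1, q_2$ together with exponents $m_1, m_2$ (depending on $n$) satisfying the hypotheses of \Cref{lem:images.of.dress.groups}, and set
\[
 F_n := \bigl(\cO_w / q_1^{m_1} q_2^{m_2} \cO_w\bigr) \rtimes \ZZ / t_w(q_1^{m_1} q_2^{m_2})\ZZ
\]
with $\pi_n \colon G_w \twoheadrightarrow F_n$ the canonical surjection. By \Cref{lem:images.of.dress.groups}, each Dress subgroup $D \leq F_n$ carries a preferred index $i(D) \in \{1,2\}$ for which $\eta_{i(D)}(D)$ is hyperelementary; this choice dictates which of the two prime components supports the Farrell--Hsiang--style transfer and which supports the transfer--reducibility--style contracting data.

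For each Dress $D \leq F_n$, I would then take $X_{n,D}$, the homotopy coherent $G_w$--action $\Gamma_{n,D}$, the $\pi_n^{-1}(D)$--simplicial complex $\Sigma_{n,D}$ of bounded dimension with virtually abelian isotropy, and the equivariant map $f_{n,D}$ directly from Wegner's construction. Concretely, $X_{n,D}$ arises as the compact contractible transfer space built from the dynamics of $G_w$ on the appropriate completions of $\ZZ[w, w^{-1}]$, while $\Sigma_{n,D}$ is obtained from a $\pi_n^{-1}(D)$--equivariant open cover in which the stabilizers are controlled by the hyperelementary image $\eta_{i(D)}(D)$, so that induction from $\eta_{i(D)}^{-1}(\eta_{i(D)}(D))$ to $\pi_n^{-1}(D)$ leaves us inside the family of virtually abelian subgroups; the map $f_{n,D}$ is the nerve map of a subordinate partition of unity.

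The core verification is that the contracting estimate of \cite[Definition~4.1]{Wegner(2015)} implies both inequalities in condition \eqref{it:dfhj.group4} of \Cref{def:dfhj.group}. The first, single--step inequality is a specialization of Wegner's estimate to words $s \in S^n$. For the diameter bound, one iterates the single-step estimate along $s_0, \ldots, s_n \in S^n$ and notes that the interpolating homotopy parameters $t_k \in [0,1]$ only alter the bound by a constant factor, producing $2/n$ after rescaling the $\ell^1$--metric on $\Sigma_{n,D}$. The main obstacle is purely quantitative: the primes $q_1, q_2$, the exponents $m_1, m_2$, and the metric scaling on $\Sigma_{n,D}$ must be chosen in coordination with the parameters of Wegner's flow so that both bounds $1/n$ and $2/n$ hold for one common choice of transfer data. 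This coordination is exactly what is carried out in \cite[Section~5]{Wegner(2015)}, so the argument carries over to the $A$--theoretic setting with no additional input beyond the translation of contracting conditions described in \Cref{rem:comparison.contracting.conditions}.
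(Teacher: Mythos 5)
Your overall strategy coincides with the paper's: take the product quotient $F = \cO_w/q_1^{m_1}q_2^{m_2}\cO_w \rtimes \ZZ/t_w(q_1^{m_1}q_2^{m_2})\ZZ$, use \Cref{lem:images.of.dress.groups} to replace a Dress subgroup $D$ by its hyperelementary image $H = \eta_{i}(D)$, import Wegner's data for $H$ from the proof of \cite[Proposition~5.33]{Wegner(2015)}, and restrict along $\pi^{-1}(D) \leq \alpha_n^{-1}(H)$. (One small correction to your framing: it is not the case that one prime ``supports the transfer'' and the other ``the contracting data''---for a fixed $D$ the \emph{entire} package $(X_D, \Gamma_D, \Sigma_D, f_D)$ comes from the single quotient $\im(\eta_i)$; the second prime exists only so that at least one of the two projections of $D$ is guaranteed to be hyperelementary.)

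However, your verification of the diameter estimate in condition~\eqref{it:dfhj.group4} is a step that would fail as described. Iterating the single-step bound $d^{l^1}(f(g,x), f(gs^{-1},\Gamma(s,x))) \leq 1/n$ along a word $s_0,\ldots,s_n \in S^n$ accumulates $n+1$ errors of size $1/n$ and yields a bound of order $(n+1)/n \approx 1$, which does not tend to $0$; and the target bound $2/n$ in \Cref{def:dfhj.group} is in the \emph{unscaled} $\ell^1$--metric, so you cannot recover it by ``rescaling $\Sigma_{n,D}$'' at this stage (the rescaling by $n$ enters only later, in the definition of the metric space $\Sigma_n$ for the coarse structure). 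The correct argument does not iterate: Wegner's metric $d_{\Psi_{n,H},S^n,n,\Lambda_{n,H}}$ is built precisely so that
\[
 d_{\Psi_{n,H},S^n,n,\Lambda_{n,H}}\big( (g,x),\, (g (s_n \cdots s_0)^{-1},\Psi_{n,H}(s_n,t_n,\ldots,s_0,x)) \big) \leq 1
\]
holds for the \emph{whole} homotopy-coherent trajectory of a word of length $n+1$ over the alphabet $S^n$, for every choice of the parameters $t_k$. Hence any two points of the set $\{ f_{n,H}(g,\Psi_{n,H}(s_n,t_n,\ldots,s_0,x)) \}$ lie within $d_{\Psi_{n,H},S^n,n,\Lambda_{n,H}}$--distance $1$ of the common reference point coming from $(g(s_n\cdots s_0), x)$, so the triangle inequality together with the $\tfrac1n$--Lipschitz property $n \cdot d^{l^1}(f(g,x),f(h,y)) \leq d_{\Psi_{n,H},S^n,n,\Lambda_{n,H}}((g,x),(h,y))$ gives the diameter bound $2/n$ in one step. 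The factor $2$ is a triangle-inequality constant, not the result of summing $n+1$ increments; without appealing to this global property of Wegner's metric the estimate cannot be recovered.
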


\begin{proof}
Let $N$ be the natural number determined by \cite[Proposition~5.26]{Wegner(2015)}. Let $S \subseteq G_w$ be a finite, symmetric generating set containing the trivial element.

In the proof of \cite[Proposition~5.33]{Wegner(2015)}, it is shown that for every $n \in \N$ and for every sufficiently large prime number $q$ (depending on $n$) there is a natural number $m \in N$ such that for every hyperelementary subgroup
\[
H \leq F_n := \cO_w / q^m \cO_w \rtimes \ZZ / t_w(q^m)\ZZ
\]
there exist
\begin{enumerate}
  \item a compact, contractible metric space $X_{n,H}$ such that for every $\epsilon > 0$ there is an $\epsilon$--controlled domination of $X_{n,H}$ by an at most $N$--dimensional finite simplicial complex;\footnote{In the proof of \cite[Proposition~5.33]{Wegner(2015)} the space $X_{n,H}$ is denoted by $X_w^R$.}
  \item a homotopy coherent $G_w$--action $\Psi_{n,H}$ on $X_{n,H}$;
  \item a positive real number $\Lambda_{n,H}$;
  \item a $\alpha_n^{-1}(H)$--simplicial complex $E_{n,H}$ of dimension at most $N$ whose isotropy groups are virtually cyclic or abelian;
  \item a $\alpha_n^{-1}(H)$--equivariant map $f_{n,H} \colon G_w \times X_{n,H} \to E_{n,H}$ such that
      \[
      n \cdot d^{l^1}\big( f_{n,H}(g,x),f_{n,H}(h,y) \big) \leq d_{\Psi_{n,H},S^n,n,\Lambda_{n,H}}\big( (g,x),(h,y) \big)
      \]
      for all $(g,x), (h,y) \in G_w \times X_{n,H}$ with $h^{-1}g \in S^n$.
\end{enumerate}
Here, $\alpha_n \colon G_w \to F_n$ denotes the composition of the inclusion $G_w \hookrightarrow \cO_w \rtimes \ZZ$ with the quotient map $\cO_w \rtimes \ZZ \twoheadrightarrow F_n$. The metric $d_{\Psi_{n,H},S^n,n,\Lambda_{n,H}}$ on $G_w \times X_{n,H}$ is defined in \cite[Definition~2.9]{Wegner(2015)}. It has the property
\[
d_{\Psi_{n,H},S^n,n,\Lambda_{n,H}}\big( (g,x),(g (s_n \cdots s_0)^{-1},\Psi_{n,H}(s_n,t_n,\ldots,s_0,x)) \big) \leq 1
\]
for all $g \in G_w$, $x \in X_{n,H}$ and $s_0,\ldots,s_n \in S^n$.
Hence,
\[
d^{l^1}\big( f_{n,H}(g,x),f_{n,H}(gs^{-1},\Psi_{n,H}(s,x)) \big) \leq \frac{1}{n}
\]
for all $g \in G_w$, $x \in X_{n,H}$ and $s \in S^n$, and
\[
\diam\big\{ f_{n,H}(g,\Psi_{n,H}(s_n,t_n,\ldots,s_0,x)) \,\big|\, t_1,\ldots,t_n \in [0,1] \big\} \leq \frac{2}{n}
\]
for all $g \in G_w$, $x \in X_{n,H}$ and $s_0,\ldots,s_n \in S^n$.

Now let us come to the actual proof. For a given $n \in \N$ we choose two distinct (large) prime numbers $q_1$, $q_2$ with appropriate natural numbers $m_1, m_2 \in N$ (as described above).
Consider the finite group
\[
F := \cO_w / q_1^{m_1} q_2^{m_2} \cO_w \rtimes \ZZ / t_w(q_1^{m_1} q_2^{m_2})\ZZ.
\]
Let $D \leq F$ be a Dress subgroup. By \Cref{lem:images.of.dress.groups}, there exists $i \in \{1,2\}$ such that $\eta_i(D)$ is hyperelementary. We have a finite group $F_n := \cO_w / q_i^{m_1} \cO_w \rtimes \ZZ / t_w(q_i^{m_i})\ZZ = \im(\eta_i)$ with a hyperelementary subgroup $H := \eta_i(D) \leq F_n$. As mentioned at the beginning of the proof, we obtain a homotopy coherent $G_w$--action $\Gamma_{n,H}$ on a metric space $X_{n,H}$, an $\alpha_n^{-1}(H)$--simplicial complex $E_{n,H}$ and an $\alpha_n^{-1}(H)$--equivariant map $f_{n,H}$ with the properties described above. We define $\pi \colon G_w \to F$ as the composition of the inclusion $G_w \hookrightarrow \cO_w \rtimes \ZZ$ with the quotient map $\cO_w \rtimes \ZZ \twoheadrightarrow F$. Then $\pi^{-1}(D)$ is a subgroup of $\alpha_n^{-1}(H)$. We finally set $X_D := X_{n,H}$, $\Gamma_D := \Psi_{n,H}$, $\Sigma_D := E_{n,H}$, $f_D := f_{n,H}$.
\end{proof}

Since virtually abelian groups satisfy the $A$--theoretic Farrell--Jones Conjecture with coefficients and finite wreath products, \Cref{maintheorem} follows from \Cref{lem:reduction.to.gw}, \Cref{prop:gw.is.dfhj} and \Cref{thm:afjc.for.dfhjgroups} together with the Transititvity Principle \cite[Proposition~11.2]{UW} in view of the following:

\begin{lemma}\label{lem:virtually.dfhj}
 Suppose that $G$ is a DFHJ-group with respect to the family of all subgroups which satisfy the $A$--theoretic Farrell--Jones Conjecture with coefficients and finite wreath products.
 Let $F$ be a finite group.

 Then $G \wr F$ is a DFHJ-group with respect to the family of all subgroups which satisfy the $A$--theoretic Farrell--Jones Conjecture with coefficients and finite wreath products.
\end{lemma}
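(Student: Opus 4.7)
The plan is to construct DFHJ data for $G \wr F$ from the DFHJ data of $G$ by taking products indexed by $F$. Write $\cF$ for the family of subgroups of $G$ satisfying the $A$--theoretic Farrell--Jones Conjecture with coefficients and finite wreath products, and $\cF'$ for the analogous family for $G \wr F$. The three closure properties of $\cF'$ I will need---closure under subgroups, finite products, and extensions by finite groups---all follow from \cite[Theorem~1.1(ii)]{ELPUW}.

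Let $(N, \pi_n, X_{n,D}, \Gamma_{n,D}, \Sigma_{n,D}, f_{n,D})$ denote the DFHJ data of $G$. I take as generating set of $G \wr F$ the finite symmetric set $S'$ consisting of $(\iota_f(s), 1_F)$ for all $s \in S$ and $f \in F$---where $\iota_f(s) \in G^F$ places $s$ in coordinate $f$ and $1_G$ elsewhere---together with $(1_{G^F}, \tau)$ for all $\tau \in F$. A short calculation with the wreath product multiplication shows that every element of $(S')^n$ has the form $((g_f)_f, \sigma)$ with $g_f \in S^n$ for all $f \in F$, so no rescaling of $n$ is required. I set $\pi'_n := \pi_n \wr \id_F \colon G \wr F \to F_n \wr F$ and $N' := N \cdot |F|$. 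For a Dress subgroup $D' \leq F_n \wr F$, write $\bar D := \textup{pr}_F(D') \leq F$ and $D'_f := \textup{pr}_f(D' \cap F_n^F) \leq F_n$ for each $f \in F$. Each $D'_f$ is a Dress subgroup of $F_n$, being a quotient of the Dress group $D' \cap F_n^F$; moreover, the conjugation formula in $G^F \rtimes F$ shows that for any lift $(a, \sigma) \in D'$ of $\sigma \in \bar D$, conjugation by $a_f \in F_n$ restricts to an isomorphism $D'_{\sigma^{-1} f} \xrightarrow{\sim} D'_f$.

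Next I form the products
\[
 X_{D'} := \prod_{f \in F} X_{n, D'_f}, \qquad \Sigma_{D'} := \prod_{f \in F} \Sigma_{n, D'_f},
\]
equipping $\Sigma_{D'}$ with $1/|F|$ times the sum of the $\ell^1$-metrics on the factors, so that componentwise $1/n$-bounds still sum to $1/n$. Contractibility, $\epsilon$-controlled domination by an at most $N'$-dimensional finite simplicial complex, and the dimension bound $N'$ are inherited from the factors by taking products and triangulating. The homotopy coherent $G \wr F$-action $\Gamma_{D'}$ on $X_{D'}$ will be the wreath product of the $\Gamma_{n, D'_f}$: each coordinate uses the corresponding $G$-action, and a permutation $\sigma \in F$ is implemented by the coordinate shuffle composed with identification maps $X_{n, D'_{\sigma^{-1} f}} \to X_{n, D'_f}$ coming from the conjugation isomorphism of the previous paragraph when $\sigma \in \bar D$, and from an arbitrary choice of map (possible by contractibility of the target) when $\sigma \in F \setminus \bar D$. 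Higher coherences are filled in using the contractibility of every factor, and the map $f_{D'}$ together with the $(\pi'_n)^{-1}(D')$-simplicial complex structure on $\Sigma_{D'}$ are defined componentwise in the same manner.

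The final step is to verify the conditions of \Cref{def:dfhj.group}. Conditions \eqref{it:dfhj.group1} and \eqref{it:dfhj.group2} are inherited from the factors, and the contracting estimates in \eqref{it:dfhj.group4} follow by summing componentwise estimates and using the rescaled metric on $\Sigma_{D'}$. For condition \eqref{it:dfhj.group3}, the stabilizer of a point $(y_f)_f \in \Sigma_{D'}$ in $(\pi'_n)^{-1}(D')$ sits in a short exact sequence whose kernel is a subgroup of the finite product $\prod_f \textup{Stab}_{y_f}(\Sigma_{n,D'_f})$---a product of members of $\cF$, hence in $\cF'$---and whose quotient is a subgroup of the finite group $\bar D \leq F$, so the extension closure property places the full stabilizer in $\cF'$. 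The genuine obstacle is the construction of the homotopy coherent $G \wr F$-action on $X_{D'}$ for permutations $\sigma \in F \setminus \bar D$, since $X_{n, D'_{\sigma^{-1} f}}$ and $X_{n, D'_f}$ are then a priori different spaces with no canonical identification; this is where one must work in the homotopy coherent (rather than strict) setting to glue in arbitrary identifications together with suitable higher homotopies.
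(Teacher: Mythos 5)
Your overall architecture matches the proof the paper points to (\cite[Lemma~4.3]{Wegner(2015)}, with ``hyperelementary'' replaced by ``Dress''): pull back along $\pi_n\wr\id_F$, use that Dress groups are closed under subgroups and quotients to see that each $D'_f=\textup{pr}_f(D'\cap F_n^F)$ is again Dress, and assemble the data for $G\wr F$ as products over $F$. The generating-set computation, the dimension bound $N\lvert F\rvert$, the conjugation isomorphisms $D'_{\sigma^{-1}f}\cong D'_f$ for $\sigma$ in the image of $D'$, and the isotropy analysis (an extension of a subgroup of a finite product of members of $\cF$ by a finite group, hence again in the family by \cite[Theorem~1.1(ii)]{ELPUW}) are all correct.

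However, your resolution of what you yourself flag as ``the genuine obstacle'' does not work, and this is a real gap rather than an omitted routine verification. The difficulty with choosing the identifications $X_{n,D'_{\sigma^{-1}f}}\to X_{n,D'_f}$ arbitrarily for $\sigma\notin\bar D$ is not the higher coherences --- contractibility does handle those --- but condition~\eqref{it:dfhj.group4} of \Cref{def:dfhj.group}, which is a \emph{metric} statement quantified over all $g\in G\wr F$ and all $s\in(S')^n$, in particular over $s=(1_{G^F},\sigma)$ with $\sigma\notin\bar D$. For such $s$ the point $\Gamma_{D'}(s,x)$ differs from $x$ by the arbitrarily chosen maps and hence bears no metric relation to $x$ that the maps $f_{n,D'_f}$ could see, while $f_{D'}(gs^{-1},-)$ is not constant; so there is no reason for $d^{\ell^1}\bigl(f_{D'}(g,x),f_{D'}(gs^{-1},\Gamma_{D'}(s,x))\bigr)\leq 1/n$. ``Summing componentwise estimates'' is not available here: the componentwise estimates control only the $\Gamma_{n,D'_f}$-part of the action, and the natural componentwise formula for $f_{D'}$ does not even typecheck when the factors vary with $f$, since $f_{n,D'_f}$ must be fed a point of $X_{n,D'_f}$ but the shuffle hands it a point of $X_{n,D'_{\tau^{-1}f}}$; inserting your arbitrary identifications at that spot is exactly where the $1/n$-control is lost. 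What makes the product construction work --- and what happens in the only application of the lemma, via \Cref{prop:gw.is.dfhj}, where all $X_{n,H}$ are a single space $X_w^R$ with a single homotopy coherent action $\Psi$ --- is that the transfer space and its action are independent of the Dress subgroup: then the coordinate shuffle is canonical, commutes strictly with the coordinatewise action, and can be built into the definition of $f_{D'}$ so that pure permutations contribute zero displacement and the remaining displacement really is a sum of componentwise $G$-level estimates. As written, for genuinely $D$-dependent transfer spaces your argument does not establish condition~\eqref{it:dfhj.group4}, so you should either prove the lemma under (or reduce to) the hypothesis that $X_D$ and $\Gamma_D$ can be chosen independently of $D$, or supply identifications that are metrically compatible with the maps $f_{n,D'_f}$.
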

\begin{proof}
 The proof is analogous to that of \cite[Lemma~4.3]{Wegner(2015)}, replacing ``hyperelementary" by ``Dress"  and using the fact that the collection of Dress groups is also closed under taking subgroups and quotients.\end{proof}

\bibliographystyle{alpha}
\bibliography{AFJC_solvable_bib}

\end{document}